\definecolor{Blue}{rgb}{0.3,0.3,0.9}
\numberwithin{equation}{section}
\newtheorem{secn}{Definition}[section]
\newtheorem{thm}[secn]{Theorem}
\newtheorem{cor}[secn]{Corollary}
\newtheorem{prop}[secn]{Proposition}
\newtheorem{lem}[secn]{Lemma}
\newtheorem{defn}[secn]{Definition}
\newcommand\CC{{\mathcal C}}
\newcommand\FF{{\mathcal F}}
\newcommand\GG{{\mathcal G}}
\newcommand\HH{{\mathcal H}}
\newcommand\LL{{\mathcal L}}
\newcommand\MM{{\mathcal M}}
\newcommand\PP{{\mathcal P}}
\newcommand\TT{{\mathcal T}}
\newcommand\TC{{\TT\CC}}
\newcommand\PMF{{\PP\kern-2pt\MM\FF}}
\newcommand\PML{{\PP\kern-2pt\MM\LL}}
\newcommand\hhat{\widehat}
\newcommand{\fsubd}{\mathrel{{\scriptstyle\searrow}\kern-1ex^d\kern0.5ex}}
\newcommand{\bsubd}{\mathrel{{\scriptstyle\swarrow}\kern-1.6ex^d\kern0.8ex}}
\newcommand{\fsubeq}{\mathrel{\raise-.7ex\hbox{$\overset{\searrow}{=}$}}}
\newcommand{\bsubeq}{\mathrel{\raise-.7ex\hbox{$\overset{\swarrow}{=}$}}}
\newcommand{\bbar}{\overline}
\newcommand{\tsh}[1]{\left\{\kern-.9ex\left\{#1\right\}\kern-.9ex\right\}}
\begin{document}
\title{Height in Splittings of Relatively Hyperbolic Groups}
\author{Abhijit Pal}
\address{Department of Mathematics \& Statistics, IIT-Kanpur,208016}
\footnote{Research Supported by DST-INSPIRE Grant}
\email{abhipal@iitk.ac.in}

\begin{abstract}
Given a finite graph of relatively hyperbolic groups with its fundamental group relatively hyperbolic and 
 edge groups quasi-isometrically embedded and relatively quasiconvex in vertex groups, we prove that  vertex groups are relatively quasiconvex
 if and only if all the vertex groups have finite relative height in the fundamental group.
 \end{abstract}
 \maketitle
  {\textbf{Keywords}: Relatively hyperbolic groups, Hyperbolic groups, Graph of Groups }
  
 {\textbf{Mathematics Subject Classification} : 20F65, 20F67, 20E08}

\section{Introduction}
 A subgroup $H$ of a word hyperbolic group $G$ is said to have \textit{height} 
$n$, if there exists $n$ distinct cosets ${g_1}H,...,{g_n}H$ such that $\bigcap\limits_{i=1}^ng_iH{g_i}^{-1}$
is infinite and $n$ is maximal possible. In \cite{GMRS},  Gitik, Mitra, Rips and Sageev proved that a quasiconvex subgroup of a word hyperbolic group has finite height (Theorem 2.2 of \cite{GMRS}).
Converse of this result was asked by Swarup and it is still an open problem. As a partial converse, in \cite{height}, Mitra proved the following theorem:
\begin{thm}\label{mitra thm}(Theorem 4.6 of \cite{height}) Suppose $G$ is a  word hyperbolic group  which splits over
a subgroup $H$ (i.e. $G=G_1*_HG_2$ or $G=G_1*_H$) with vertex groups and edge groups word hyperbolic and
inclusions of $H$ in $G_1,G_2$ are quasi-isometric embeddings. Then, $H$ has finite height if and only if $H$ is quasiconvex
in $G$.
\end{thm}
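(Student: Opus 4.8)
The plan is to prove the two implications separately: the forward one is immediate and the reverse one carries all the work. For \emph{quasiconvex $\Rightarrow$ finite height} nothing about the splitting is needed --- it is exactly the Gitik--Mitra--Rips--Sageev theorem \cite{GMRS} that a quasiconvex subgroup of a hyperbolic group has finite height (in fact finite width), so I would simply quote it.

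For \emph{finite height $\Rightarrow$ quasiconvex} I would first pass to geometry. Let $T$ be the Bass--Serre tree of the splitting and $\pi\colon X\to T$ the associated tree of spaces: vertex spaces are Cayley graphs of the vertex groups, edge spaces are copies of $\Gamma_H$, and the edge-to-vertex maps are the given quasi-isometric embeddings. The space $X$ is quasi-isometric to $\Gamma_G$ (true for any graph of finitely generated groups), hence $\delta$-hyperbolic; the quasi-isometric-embedding hypothesis is what makes each edge space quasiconvex in its neighbouring vertex spaces and supplies, via nearest-point projection, the coarse retractions of a vertex space onto an adjacent edge space that the argument uses below. Fix the edge $e_0$ of $T$ with $\operatorname{Stab}(e_0)=H$ and the adjacent vertex $v_0$ with $\operatorname{Stab}(v_0)=G_1$. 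Transporting through $\Gamma_G\simeq X$, the subgroup $H$ corresponds to the edge space $X_{e_0}$, so it is enough to prove that $X_{e_0}$ is quasiconvex in $X$. I would reduce this to a \emph{uniform bound on penetration}: a constant $D_0$ such that every $X$-geodesic $\gamma$ with endpoints in $X_{e_0}$ has $\operatorname{diam}_T\pi(\gamma)\le D_0$. Granting such a bound, a standard tree-of-spaces argument --- hyperbolicity of $X$ together with quasiconvexity of the edge spaces inside the vertex spaces, organized if one wishes via Mitra's ladders over $\Gamma_H$-geodesics --- confines $\gamma$ to a uniform neighbourhood of $X_{e_0}$.

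The point is then to extract $D_0$ from finite height. Suppose some $X$-geodesic $\gamma$ joining $p,q\in X_{e_0}$ penetrates $T$ to depth $D$, along a $T$-geodesic $e_0,v_0,e_1,u_1,e_2,\dots,e_D$. Since $p,q$ both lie over $e_0$, for each $i$ the geodesic $\gamma$ crosses the edge space $X_{e_i}$, and one may pick a crossing pair $a_i,b_i\in X_{e_i}$ so that the ``hairpin'' $\gamma[a_i,b_i]$ is contained in the part of $X$ over the subtree of $T$ beyond $e_i$ and still reaches depth $D$; as $\gamma$ is geodesic this forces $d_{X_{e_i}}(a_i,b_i)\ge\ell(\gamma[a_i,b_i])\gtrsim D-i$, so the crossings are pushed apart inside each edge space. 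Because $e_0,\dots,e_D$ lie on a single $T$-geodesic, $\bigcap_{j=0}^{D}\operatorname{Stab}(e_j)=\operatorname{Stab}(e_0)\cap\operatorname{Stab}(e_D)$; and since these are $D+1$ distinct edges, choosing $h_j$ with $h_je_0=e_j$ produces $D+1$ pairwise distinct cosets $h_0H=H,h_1H,\dots,h_DH$ whose corresponding conjugates $h_jHh_j^{-1}=\operatorname{Stab}(e_j)$ have exactly $\operatorname{Stab}(e_0)\cap\operatorname{Stab}(e_D)$ as common intersection.

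The main obstacle is to turn the spreading of the hairpins into the statement that, once $D$ exceeds a threshold $D_1$ depending only on $\delta$ and the quasi-isometry constants, the group $\operatorname{Stab}(e_0)\cap\operatorname{Stab}(e_D)$ is infinite. The delicate point is precisely that $X_{e_0}\cong\Gamma_H$ is not yet known to be quasiconvex in $X$, so $X$-distances cannot be converted freely into $\Gamma_H$-distances; the hairpin estimate has to be pushed through the vertex-space geometry and the edge-map retractions so as to produce an element of $\operatorname{Stab}(e_0)\cap\operatorname{Stab}(e_D)$ whose word length grows with $D$. Since a hyperbolic group has only finitely many conjugacy classes of finite subgroups, there is a uniform bound $B$ on the order of any finite subgroup, so such a growing element forces the intersection to be infinite once its length exceeds $B$. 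Granting this, if $H$ has height $n$ then, the cosets $h_jH$ being distinct, $D$ cannot exceed $D_0:=\max\{n,D_1\}$, which is the required penetration bound; the reduction above then yields quasiconvexity of $X_{e_0}$ in $X$, hence of $H$ in $G$. The HNN case $G=G_1*_H$ runs identically, with the obvious bookkeeping of the single edge orbit and the stable letter in $T$.
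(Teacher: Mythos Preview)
The paper does not itself prove this theorem --- it is quoted from \cite{height} --- but the paper's Theorem~\ref{main} is the relatively hyperbolic generalisation and is proved by Mitra's original scheme, so that is the benchmark. Your overall architecture (GMRS for one direction; contrapositive via unbounded penetration in the Bass--Serre tree for the other) matches, but the crucial step is missing, and you say so yourself (``Granting this \dots'').

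Your hairpin construction gives, for each edge $e_i$ on the penetrated $T$-segment, a pair $a_i,b_i\in X_{e_i}$ with $d_{X_{e_i}}(a_i,b_i)$ large. From $a_0,b_0\in X_{e_0}=\Gamma_H$ one obtains an element $a_0^{-1}b_0\in H=\operatorname{Stab}(e_0)$ of large word length, but there is no mechanism placing this element in $\operatorname{Stab}(e_D)$: the crossing points $a_0,b_0$ are determined solely by where $\gamma$ meets $X_{e_0}$ and carry no information about the far edges. The pairs $(a_i,b_i)$ for different $i$ are not coherently related to one another, so no single element is produced that lies in $\bigcap_j \operatorname{Stab}(e_j)$. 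A second, smaller problem: even if you had an element of $\operatorname{Stab}(e_0)\cap\operatorname{Stab}(e_D)$ of large \emph{word length}, this does not force the intersection to be infinite --- a finite subgroup of bounded order can contain elements of arbitrarily large word length (conjugate a short torsion element far away). What is needed is an element of infinite order.

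Mitra's device, reproduced in the proof of Theorem~\ref{main}, supplies exactly what your hairpins lack. One builds the ladder $B_{\hat\lambda}$ over a long geodesic $\hat\lambda=[x,y]$ in the base vertex space and extracts from it a \emph{hallway}: vertex-space geodesics $\mu_j\subset X_{v_j}$ whose endpoints are tracked by two $\rho$-quasi-isometric \emph{sections} $\Sigma^1,\Sigma^2$ of the whole segment $[v_0,v_k]\subset T$ (Lemma~\ref{ray lem}, Lemma~\ref{hallway existence}). Because the sections move by words of length at most $\rho$ between consecutive vertices, a pigeonhole on the finitely many possible increment pairs arranges $(\Sigma^1(v_j))^{-1}\Sigma^1(v_{j-1})=(\Sigma^2(v_j))^{-1}\Sigma^2(v_{j-1})$ along an arbitrarily long run. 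This is precisely what makes the single ``width'' element $w_0=(\Sigma^1(v_0))^{-1}\Sigma^2(v_0)$ \emph{conjugate} to the width element at every later vertex, so that $w_0$ lies in arbitrarily many distinct conjugates of a vertex group; since $|w_0|\to\infty$ one may take $w_0$ of infinite order, contradicting finite height. The two coherent sections, not isolated crossing pairs, are the missing idea.
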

For a relatively hyperbolic group, if we continue to have the same definition of height of subgroups as above then a relatively quasiconvex subgroup may not have finite height. For instance, if some group $G$ is hyperbolic relative to $\mathbb Z\oplus\mathbb Z$ then 
the subgroup $\mathbb Z$ ($\hookrightarrow \mathbb Z\oplus\mathbb Z$) is relatively quasiconvex but has infinite height in $G$ according to the definition of height of subgroups as above. An example of  a group $G$ hyperbolic relative to $\mathbb Z\oplus\mathbb Z$ is $\mathbb Z*(\mathbb Z\oplus\mathbb Z)$ or can be obtained from a hyperbolic $3$-manifold fibering over circle with fiber a punctured hyperbolic surface (See Theorem 4.9 of \cite{Mj com}).
The notion of height for a subgroup of a relatively hyperbolic group was first given by Hruska and Wise in \cite{wise}.
To distinguish the notion of height of subgroups in a relatively hyperbolic group from that of  a word hyperbolic group, it is renamed and  now commonly known as relative height of subgroups (see Definition 7.1 of \cite{GM} due to Groves and Manning).
A subgroup $H$ of a relatively hyperbolic group $G$  is said to have \textit{relative height} $n$, if there exists $n$ distinct cosets ${g_1}H,...,{g_n}H$ such that $\bigcap\limits_{i=1}^ng_iH{g_i}^{-1}$
contains a loxodormic element  and $n$ is maximal possible. In \cite{wise},
Hruska and Wise proved that a relatively quasiconvex subgroup $H$ of a relatively hyperbolic group $G$
has  finite relative height. Converse of this result is still not known. In this article, we will generalize Mitra's Theorem \ref{mitra thm}
for a finite graph of relatively hyperbolic groups.

\begin{thm}(Theorem \ref{main})
Let  $G$ be the fundamental group of a finite  graph of groups  $(\GG,\Lambda)$
satisfying the following conditions:\\
(i) for each vertex $v$ of the finite graph $\Lambda$, the vertex group $G_v$ is finitely generated and hyperbolic relative to a 
finite collection $\HH_v$ of finitely generated subgroups of $G_v$;\\
(ii) the edge groups are quasi-isometrically embedded and relatively quasiconvex in respective vertex groups,  \\
(iii) $G$ is hyperbolic relative to  $\HH$, where $\HH$ is the collection of images of all $\HH_v$ in $G$,\\
(iv) for each vertex $v$ of $\Lambda$, $G_v$ has finite relative height in $G$.\\
Then for every vertex $w$ of $\Lambda$ the group $G_w$ is a relatively quasiconvex subgroup of $G$.
\end{thm}

 Let $G$ be a word hyperbolic group such that $G=A*_CB$
where $A,B$ are hyperbolic groups and $C$ is quasiconvex in both $A,B$.
Suppose $G$ acts acylindrically on the Bass-Serre tree i.e. there exists a constant $k\geq 1$ such that for every geodesic segment in the  Bass-Serre tree of length $\geq k$, the pointwise
stabilizer of that segment in $G$ is finite. I.Kapovich in \cite{kapo} showed that  for a
finite graph of hyperbolic groups with edge groups quasi-isometrically embedded in vertex groups if the fundamental group of graph of groups
acts acylindrically on the Bass-Serre tree then all vertex groups are quasiconvex in the fundamental group. The techniques used by I.Kapovich in \cite{kapo} are completely different from that
of Mitra's  \cite{height}. An analogous version of I.Kapovich's result for relatively hyperbolic groups was given by Dahmani in \cite{dah}.
A subgroup of a relatively hyperbolic group is called \textit{fully quasiconvex} if it is relatively quasiconvex and 
only finitely many translates of its limit set in the relative hyperbolic boundary can intersect altogether (Definition 1.6 of \cite{dah}, see also Definition 6.2 of \cite{hrus}).
In \cite{dah}, Dahmani proved that for a
finite graph of relatively hyperbolic groups with edge groups fully quasiconvex in vertex groups if the fundamental group of graph of groups acts acylindrically on the Bass-Serre tree then the
vertex groups are fully quasiconvex in the fundamental group of graph of groups (See Corollary 4.1 of \cite{dah}). Dahmani's method of proof involved using the action of vertex groups on their 
relatively hyperbolic boundaries. Our approach to the proof of Theorem \ref{main} relies heavily on the 
machineries developed in \cite{pal} and does not involve group action on boundaries. 

A source of examples of finite graphs of relatively hyperbolic groups with its fundamental group also relatively hyperbolic comes from the Combination Theorem due to Mj. and Reeves (See Theorem 4.6 of \cite{Mj com}). In the word-hyperbolic setting, the Combination Theorem in same flavor is due to Bestvina and Feighn and it states that for a finite graph of word hyperbolic groups with edge groups quasi-isometrically embedded in vertex groups if ‘hallways flare condition’ is satisfied then the fundamental group of the finite graph of word hyperbolic groups is word hyperbolic (See Section 4 of \cite{best}).
The Mj.-Reeves's Combination Theorem for a finite graph of relatively hyperbolic groups assumes conditions similar to item (ii) of Theorem \ref{main} and certain ‘flaring’ conditions (see conditions (4) \& (5) of Theorem 4.6 of \cite{Mj com})   so that  the fundamental group of the graph of relatively hyperbolic groups is also relatively hyperbolic.

Let $G=A*_CB$ where $G,A,B,C$ are word hyperbolic groups and  $A,B,C$ are quasiconvex in $G$.
Using Mitra's Theorem \ref{mitra thm}, Swarup (Theorem 2 of \cite{swarup}) proved that  a finitely generated subgroup $Q$ of $G$ is quasiconvex if and only if 
the intersection of $Q$ with every conjugates of $A,B,$  in $G$ are quasiconvex in $G$.
 As an application of Theorem \ref{main}, 
we will give a natural generalization of this result to relatively hyperbolic case (See Theorem \ref{amalg thm}).

\section{Relatively Hyperbolic Groups}\label{rhg}

 \subsection{Relative Hyperbolicity}
Relatively hyperbolic groups were first introduced by Gromov 
\cite{gro} to study hyperbolic manifolds with cusps. The notion of relative hyperbolicity  was then studied by several people, we refer to the article \cite{hrus} by 
Hruska for several equivalent notions of relatively hyperbolic groups. Here we give two equivalent definitions of 
relatively hyperbolic groups due to Farb \cite{farb} and Bowditch \cite{bow}. 
\begin{defn}(Hyperbolic Metric Space)\label{geod hyp}
  Let $\delta\geq 0$. We say that a geodesic triangle $\Delta$ is $\delta$-\textit{slim } in a geodesic metric space
 if any side of the triangle $\Delta$ is contained in the $\delta$- neighborhood of the union of the other two sides.
A geodesic metric space is said to be $\delta$-\textit{hyperbolic} if all the triangles are $\delta$-slim. 
A geodesic metric space is said to be hyperbolic if it is $\delta$-hyperbolic for some $\delta\geq 0$.
\end{defn}

First we give the definition of relative hyperbolicity due to Farb \cite{farb}. Let $G$ be a finitely generated group and $\HH$ be a 
finite collection of finitely generated subgroups of $G$. Let $\Gamma_G$ be the Cayley graph of $G$ with respect to some finite generating set.

\begin{defn}{(Coned-off Cayley Graph, Definition 3.1 of \cite{farb})}
 The coned-off Cayley graph of $G$ with respect to $\HH$, denoted by $\hhat\Gamma_G$, 
is obtained from $\Gamma_G$ by adding an extra vertex $v(gH)$ for each left coset  $gH$ corresponding to each $H\in \HH$ and an
extra edge $[gh,v(gH)]$ of length 1/2 joining each $gh\in gH$ to $v(gH)$. 
\end{defn}

\noindent For a path $\gamma :[0,1]\to \Gamma_G$, there is an induced path $\hhat{\gamma}$
in $\hhat\Gamma_G$ obtained as follows:\\
for each maximal component $\gamma|_{[s,t]}$ of $\gamma$ lying in a coset $gH$,
replace the component $\gamma|_{[s,t]}$  by the path $[\gamma(s),v(gH)]\cup[v(gH),\gamma(t)]$
of length one passing through cone point $v(gH)$.\\
If $\hhat{\gamma}$ is a geodesic in $\hhat\Gamma_G$, 
$\gamma$ is called  {\em relative geodesic}. If $\hhat{\gamma}$ is a $P$-quasi-geodesic, $\gamma$ is called $P$-{\em relative quasi-geodesic}.\\
For a path $\gamma$ in $\hhat\Gamma_G$, we say that $\gamma$ {\em penetrates} a coset $gH$ if $\gamma$ passes through the cone point $v(gH)$
and we say that $\gamma$ is  {\em without backtracking } if whenever $\gamma$  penetrates some coset $gH$, the path $\gamma$ does not return to the coset $gH$ after leaving this coset.\\

\begin{defn}{(Bounded Coset Penetration Property, See section 3.3 of \cite{farb})}
The pair $(G,\HH)$ is said to have bounded coset penetration property if for each $k>1$ there exists $c(k)>0$ such that for any two relative $k$-quasi geodesics
$\gamma_{1}, \gamma_{2}$ in $\Gamma_{G}$ with same end points and for any $H\in \HH$, the following holds,

(1)  if $\gamma_{1}$ penetrates $gH$  but $\gamma_{2}$ does not then $\gamma_{1}$ travels at most $c(k)$ distance in $gH$.
\end{defn}

(2) if both $\gamma_{1}, \gamma_{2}$ penetrate $gH$ then the entry points as well as the exit points of the paths are $c(k)$ close to each other in $\Gamma_{G}$.

\begin{defn}\label{farb}{(B.Farb, See section 5 of \cite{farb})}
Let $\delta\geq 0. $Let $G$ be a finitely generated group and $\HH$ be a finite collection of finitely generated subgroups of it. 
The group $G$ is said to be $\delta$-hyperbolic relative to $\HH$ if $\hhat\Gamma_G$ is $\delta$-hyperbolic and the pair $(G,\HH)$ satisfies the bounded coset penetration property.
The group $G$ is said to be hyperbolic relative to $\HH$ if it is $\delta$-hyperbolic relative to $\HH$ for some $\delta\geq 0$. Conjugates of elements
of $\HH$ are called parabolic subgroups.
\end{defn}
 
Let $G$ be a group hyperbolic relative to $\HH$. An element $g\in G$ is called \textit{parabolic} if $g$ has 
infinite order and belongs to a conjugate of some $H\in\HH$.
An element $g\in G$ is called \textit{elliptic} if $g$ has finite order.
 An element $g\in G$ is called \textit{loxodormic} if $g$ has infinite order and is not parabolic.\\
The next definition by Bowditch gives a dynamical characterization of relative hyperbolicity.
For that reason we need the notion of convergence group.

\begin{defn}{(Convergence Group)}
Let $G$ be a group  acting on a compact metrizable space $M$. The action is called  convergence group action if 
for any sequence $\{g_{n}\}$ in $G$, there exists a subsequence $\{g_{\phi(n)}\}$ and $\xi^{+},\xi^{-}\in M$ such that $g_{\phi(n)}(K)$ 
converges uniformly to $\xi^{+}$, for all compact sets  $K\subset M\backslash \{\xi^{-}\}$.
\end{defn}

\begin{defn}\begin{enumerate}
 \item {(Bounded Parabolic Limit Points)}
An element $g\in G$ is called parabolic if it fixes exactly one point of $M$ and the corresponding fixed point $\xi$(say) is said to be
parabolic limit point. Furthermore, a parabolic limit point is said to be bounded parabolic if $Stab(\xi)$ acts properly discontinuously 
and cocompactly on $M\backslash \{\xi\}$.
\item {(Conical Limit Point)} Let $G$ be a group with a convergence action on $M$. A point $\xi\in M$ is said to be conical limit point if there exists a sequence 
$\{g_{n}\}$ and $\xi^{+}\neq\xi^{-}\in M$ such that 
$g_{n}\xi \rightarrow \xi^{+}$, $g_{n}\xi^{'} \rightarrow \xi^{-}$ for all $ \xi^{'}\in M\backslash \{\xi\}$.
\item {(Geometrically Finite Action)} Let $G$ be a group with a convergence action on a compact metrizable space $M$. The action is said to be 
geometrically finite if the limit points are either conical or bounded parabolic.
             
     \end{enumerate}

\end{defn}

\begin{defn}{(Bowditch, Definition 1 of \cite{bow})} \label{bowditch}
Let $G$ be a finitely generated group and $\HH$ be a finite collection of finitely generated subgroups of it. 
The group $G$ is said to hyperbolic relative to $\HH$ if it acts properly discontinuously by isometries on a proper hyperbolic metric space $\widetilde{\Gamma}$ such that
\begin{itemize}
  \item the induced action of the group $G$ on $\partial \widetilde{\Gamma}$ is  convergence and  geometrically finite.
  \item the conjugates of the elements of $\HH$ are precisely the maximal parabolic subgroups.
\end{itemize}
we call $\partial \widetilde{\Gamma}$ the Bowditch boundary of $G$.
\end{defn}
 
The Definition \ref{farb} is equivalent to the Definition \ref{bowditch} (See Theorem 5.1 of \cite{hrus}). The two notions of parabolic elements in a relatively hyperbolic group given above are also  equivalent.

\begin{defn}\label{relqc}(Relatively Quasiconvex Subgroup):
Let $q\geq 0$. Suppose $G$ is a finitely generated group hyperbolic relative to a finite collection 
$\HH$ of finitely generated subgroups of $G$.
A subgroup $K$ of $G$ is said to be $q$-relatively quasiconvex if for any two points $x,y\in K$
the geodesics in the coned-off graph $\hhat\Gamma_G$ joining $x,y$ lie in the $q$-neighborhood of $K$ (where the neighborhood is taken in the coned-off graph $\hhat\Gamma_G$).

\end{defn}

\begin{defn}(Relative Height, Definition 7.1 of \cite{GM}, Section 1.3 of \cite{wise})\label{height defn}
Suppose $G$ is hyperbolic relative to a finite collection $\HH$ of finitely generated subgroups and $H$ is a subgroup
of $G$. The \textit{relative height} of $H$ in $G$ is the maximum number
$n\geq 0$ so that there are $n$ distinct cosets ${g_1}H,...,{g_n}H$ such that $\bigcap\limits_{i=1}^ng_iH{g_i}^{-1}$ contains a loxodormic element 
 (or equivalently, $\bigcap\limits_{i=1}^n g_iH{g_i}^{-1}$  is an infinite non-parabolic subgroup).\end{defn}

\begin{thm}(Theorem 1.4 of \cite{wise})\label{finite relative height}
Let $G$ be a relatively hyperbolic group, then relative height of a relatively quasiconvex group in $G$ is finite.
\end{thm}

\subsection{Graph of Groups, Trees of Spaces}
A graph $\Lambda$ consists of a vertex set $V=Vertex(\Lambda)$, edge set $E=Edge(\Lambda)$ and two maps given by
$$E\to V\times V,~ e\mapsto (o(e),t(e)),$$ and 
$$E\to E, e\mapsto\bar e$$
satisfying the following conditions : $\bar{\bar e}=e,\bar e\neq e$ and $o(e)=t(\bar{e})$. An element $v\in V$ is called a \textit{vertex} of $\Lambda$, an element $e\in E$ is called an \textit{(oriented) edge} of $\Lambda$, and $\bar e$ is called the \textit{inverse} edge. The vertex $o(e)$ is called the \textit{origin} of $e$ and the vertex $t(e)$ is called the \textit{terminus} of $e$. For more details, see 
 Section 2.1 of \cite{serre}.
\begin{defn}(Serre, Definition 8 of \cite{serre})
 A graph of groups $(\GG,\Lambda)$ consists of the following:
 \begin{enumerate}
  \item a connected graph $\Lambda$,
  \item a group $G_v$ for each vertex $v\in V$,
  \item a group $G_e$ for each edge $e$ of $\Lambda$ such that $G_{\bar e}=G_{e}$ where $\bar e$ is reverse of $e$,
  \item monomorphisms $G_e\to G_{t(e)}$ denoted by $a\mapsto a^e$ for each edge $e$ of $\Lambda$.
 \end{enumerate}  
\end{defn}
Let $(\GG,\Lambda)$ be a graph of groups.
For each vertex $v$ of $\Lambda$, let $<U_v|R_v>$ be a presentation of the vertex group $G_v$ .
Let $U=\sqcup_{v\in V}U_v$ and $R=\sqcup_{v\in V}R_v$.
Consider the free group $\mathbb F(U \cup E)$ with generating set $U\cup E$.
For an edge $e\in E$ and an element $ a\in G_e$, consider the elements $P_e=\bar e e$, $Q_{a,e}=ea^ee^{-1}a^{-\bar e}$ in $\mathbb F(U \cup E)$. Let $F(\GG, \Lambda)$ be the group with presentation $<U \cup E|P\cup Q\cup R>$ where $P=\{P_e:e\in E\}$ and 
$Q=\{Q_{a,e}: a\in G_e,e\in E\}$. 
Thus,  $F(\GG,\Lambda)$ is a group generated by the groups $G_v$ ($v\in V$) and the 
elements $e\in E$ subject to the relations:
$$\bar e=e^{-1}~\mbox{and}~ea^ee^{-1}=a^{\bar e}$$
where $e\in E, a\in G_e$.

\begin{defn}(Fundamental Group, Section 5.1 of Serre \cite{serre}) 
Let $(\GG,\Lambda)$ be a graph of groups. Let $S$ be a maximal tree of $\Lambda$. The fundamental group $\pi_1(\GG,\Lambda,S)$ of graph of groups $(\GG,\Lambda)$ is defined
to be the quotient of $F(\GG,\Lambda)$ by the normal subgroup generated by the elements $e\in Edge(S)$. 
 \end{defn}
 If $x_e$ denotes the image of an edge $e$
in $\pi_1(\GG,\Lambda,S)$, then $\pi_1(\GG,\Lambda,S)$ is the group generated by $G_v$ ($v\in V$) and  
elements $x_e$ ($e\in E$) subject to the relations:
$$x_ea^ex^{-1}_e=a^{\bar e}, x_{\bar e}=x^{-1}_e~\mbox{if}~e\in E,a\in G_e,$$
$$x_e=1~\mbox{if}~e\in Edge(S).$$
 
Corresponding to the fundamental group $G=\pi_1(\GG,\Lambda,S)$, there is a tree $T$, called Bass-Serre tree, on which the group $G$ acts without edge-inversions such that the quotient space $T/G$ of $T$ is the finite graph $\Lambda$.
The vertex set of  $T$ is $\{gG_v:g\in G,v\in V\}$
and edge set is $\{gG_e:e\in E\}$, where $V=Vertex(\Lambda)$, $E=Edge(\Lambda)$. 
The incidence relation depends upon the orientation of the graph $\Lambda$. Fix an orientation of the graph $\Lambda$. For  an edge $e$ having orientation same
as that of induced from the graph $\Lambda$, define $o(gG_e):=gG_{o(e)}$ and $t(gG_e):=gx_eG_{t(e)}$; otherwise
define $o(gG_e):=gx^{-1}_eG_{o(e)}$ and $t(gG_e):=gG_{t(e)}$. Define $\bbar{gG_e}:=gG_{\bar e}$ for any edge $e\in E$.
We refer to the Section 5.3 of \cite{serre} for details.
Vertex stabilizer of a vertex in $T$ is conjugate to a vertex group $G_w$ for some $w\in V$. Similarly, the same holds for every edge stabilizer.

Suppose $(\GG,\Lambda)$ is a finite graph of groups with all vertex and edge groups finitely generated.
Let $\Gamma_v,\Gamma_e$ denote the Cayley graphs of $G_v,G_e$ respectively with edge lengths one, where $v\in V$ and $e\in E$.
Consider the following disjoint union :
$$\mathbb X=\underset{v\in V,g\in G}\sqcup (g\Gamma_v)\sqcup(\underset{e\in E,g\in G}\sqcup(g\Gamma_e\times [0,1])).$$
Let $X$ be the quotient of $\mathbb X$ obtained from the equivalence relations as follows:\\
(i) for every edge $e\in E$ having orientation same
as that of induced from the graph $\Lambda$,
\[ (ga,0)\sim (ga^e,0), (ga,1)\sim (gx_ea^e,1,)\]\\
(ii) for every edge $e\in E$ having orientation opposite
to that induced from the graph $\Lambda$
\[ (ga,0)\sim (gx^{-1}_ea^e,0), (ga,1)\sim (ga^e,1),\]
where $g\in G, a\in\Gamma_e$.\\
There is a natural map $P:X\to T$ such that $P(gg_v)=gG_v $ and $ P((ga,t))=x_t$ where $x_t$ is the point on the edge $gG_e$ at a distance $t$ from $o(gG_e)$, $g_v\in \Gamma_v,g\in G$ and $a\in\Gamma_e$.
For a vertex $u$ in $T$, $X_u=P^{-1}(u)$ will be called the vertex space for $u$.  
Let $e$ be an edge of $T$ and $X_e$ be the pre-image under $P$ of the mid-point of  $e$, $X_e$ will be called the edge space for $e$.
For each vertex $u$ of $T$, $P^{-1}(u)$ is isometric to a Cayley graph $\Gamma_v$ for some vertex $v$
of $\Lambda$.  Metric on $X$ is the path metric induced from
the individual metrics on vertex and edge spaces.  Thus, we have a metric space $X$ 
admitting a map $P:X\to T$ onto the simplicial tree $T$ such that the fiber to each vertex of $T$ is a geodesic space. In short, we will call $P:X\to T$ to be as \textit{tree of spaces}.

For every vertex $v$ and edge $e$ of $\Lambda$, the spaces $P^{-1}(gG_v)$, $P^{-1}(gG_e)$ are stabilized by subgroups $gG_vg^{-1}$, $ gG_eg^{-1}$ respectively.
These stabilizer subgroups, being finitely generated, act properly and cocompactly on their respective vertex and edge spaces.
The action of $G$ on $X$ is proper and  also co-compact as $\Lambda$ is a finite graph. By applying \u{S}varc-Milnor
lemma, the Cayley graph $\Gamma_G$ of $G$ is quasi-isometric to $X$. \\

For a finite  graph of groups  $(\GG,\Lambda)$ we assume that the following conditions hold:\\
(i) for each vertex $v$ of the finite graph $\Lambda$, the vertex group $G_v$ is finitely generated and hyperbolic relative to a 
finite collection $\HH_v$ of finitely generated subgroups of $G_v$;\\
(ii) the edge groups are  quasi-isometrically embedded and relatively quasiconvex in respective vertex groups.   \\
(iii) $G$ is hyperbolic relative to  $\HH$, where $\HH$ is the collection of images of all $\HH_v$ in $G$.

Condition (i) implies that every vertex space in the tree of spaces $P:X\to T$ is relatively hyperbolic. As $\Lambda$ is a finite graph,
there exists a $\delta\geq 0$ such that every coned-off vertex space is $\delta$-hyperbolic.\par
It is a standard fact in geometric group theory that a quasi-isometrically embedded subgroup of a finitely generated group
is also finitely generated. Conditions (i) and (ii) hence imply  that all edge groups are finitely generated. 
 As $\Lambda$ is a finite graph and
all local groups are finitely generated, $G$ is a finitely generated group. Condition (ii) further implies that there exists $k\geq 1$ and $\epsilon\geq 0$ such that for every edge $e$ of $T$ with end points $v_1,v_2$ there is a
$(k,\epsilon)$-quasi-isometric embedding ${f_{e,v_i}}: X_e\to X_{v_i}, i=1,2$. 
  For each edge group $G_e$, we have assumed that $G_e$ is relatively quasiconvex in $G_{o(e)},G_{t(e)}$ respectively.
Taking intersection of parabolic subgroups (in vertex groups) with the images of edge groups $G_e$,
we will get a finite collection $\HH_e$ of subgroups of $G_e$ such that $G_e$ is hyperbolic relative to $\HH_e$. The monomorphisms $G_e\hookrightarrow G_{o(e)}$, $G_e\hookrightarrow G_{t(e)}$ \textit{preserve cusps} i.e.
the images of a parabolic subgroup of $G_e$ under the maps $G_e\hookrightarrow G_{o(e)}$, $G_e\hookrightarrow G_{t(e)}$  will be again a parabolic subgroup in $G_{o(e)},G_{t(e)}$.  The monomorphisms $G_e\hookrightarrow G_{o(e)}$, $G_e\hookrightarrow G_{t(e)}$  are \textit{ strictly type preserving} i.e. the inverse images of the parabolic subgroups of $G_{o(e)},G_{t(e)}$ under the maps $G_e\hookrightarrow G_{o(e)}$, $G_e\hookrightarrow G_{t(e)}$  respectively are either empty or  parabolic subgroups of $G_e$.
 Thus, we have a tree of relatively hyperbolic spaces as per Definition 3.1 of \cite{Mj com}. Coning-off  all vertex spaces and edge spaces we have a tree of coned-off
spaces, denoted by $\TC(X)$, where the tree is same as $T$ (See Page 1787 of Section 3 of \cite{Mj com} for tree of coned-off spaces). \par
Condition (iii) implies that $\TC(X)$ is a hyperbolic metric space.\\
The surjective map $P:X\to T$ naturally induces a surjective map 
$\hhat P:TC (X) \rightarrow T$. For the sake of notation, we will denote the tree of coned-off spaces $\TC(X)$ with $\hhat P:TC (X) \rightarrow T$ as $P: \TC (X) \rightarrow T$. 
 For every vertex $v$ and edge $e$ of $T$, we shall denote coned-off vertex and edge spaces as $\hhat{ G_v}$ and 
$\hhat {X_e}$ respectively. 
\subsection{Hyperbolic Ladder and Retraction map}
\label{subsec:cons}
Let $P:X\to T$ be a tree of $\delta$-relatively hyperbolic spaces.
Given a geodesic segment $\hhat\lambda \subset \hhat {X_{v_0}}$ with end points lying outside cosets of parabolic subgroups. 
In \cite{pal} a quasiconvex set $B_{\hhat \lambda} \subset \TT\CC(X)$ was constructed and it was called to be the hyperbolic ladder (See Section 2.1 of \cite{pal}).
 We will not give here the construction of the ladder $B_{\hat \lambda}$, instead we will extract the important
properties of the ladder which will be required for our purpose.

\smallskip

\noindent {\textbf{Hyperbolic Ladder $B_{\hat\lambda}$}}\\

 $B_{\hhat\lambda}$ is a collection of paths in vertex spaces of $\TT\CC(X)$ satisfying the following properties:
\begin{enumerate}
 \item (Non-empty)\label{non-empty} For each $v\in Vertex (T)$, $B_{\hat\lambda}\cap\hhat G_v$ is either empty or a geodesic segment in $\hhat G_v$ denoted as $\hhat\lambda_v$.
 If $v=v_0$, then $\hhat\lambda_v=\hhat\lambda$.
 \item (Flow) \label{Flow} There exists a constant $C>0$ (depending only on $\delta$) such that the following happens:\\
 Let $v_1$ and $v_2$ be two adjacent vertices of $T$ joined by an edge $e$ and let $\lambda_{v_1}^b=\hhat\lambda_{v_1}\cap X_{v_1}$ for a 
 geodesic $\hhat\lambda_{v_1}$ of  $\hhat X_{v_1}$. If $B_{\hat\lambda}\cap\hhat X_{v_1}=\hhat\lambda_{v_1}$ and $Nbhd_{G_v}(\lambda_{v_1}^b;C)\cap f_{e,v_1}(X_e)\neq\phi$,
 then $B_{\hat\lambda}\cap\hhat X_{v_2}\neq\phi$.
 \item (Retraction Map, See Section 2.2 of \cite{pal})\label{Retraction Map} Let $T_1$ be subtree of $T$ spanned by $P(B_{\hhat \lambda _{v_0}})$. There exists a retraction map 
 $\hhat\pi_{\hhat\lambda}:\TT\CC(X)\to B_{\hhat\lambda}$ such that if $v\in Vertex(T_1)$ then for all $x\in\hhat G_v$,
 $\hhat\pi_{\hhat\lambda}(x)$ is a nearest point projection of $x$ onto $\hhat\lambda_v$.
 \item (Quasiconvexity\label{Quasiconvexity}, Theorem 2.2 of \cite{pal}) There exists a constant $C_0 \geq 0$ such that 
\begin{center}
$d_{\hhat X}(\hhat \Pi_{\hhat \lambda}(x), \hhat \Pi_{\hhat \lambda}(y))\leq C_0 d_{\hhat X}(x, y)+C_0$ for $x, y\in \hhat X$.  
\end{center}
If $\TT\CC(X)$ is $\delta$-hyperbolic, then $B_{\hhat \lambda}$ is quasiconvex, where the quasi-convexity constant depends only on $\delta$ 
and not on the geodesics $\hhat\lambda$.
\item \label{qi}(Quasi-isometric Sections, Lemma 2.5 of \cite{pal}) There exists a constant $\rho\geq 0$ such that for each $v\in Vertex(T_1)$ and for each 
$x\in\lambda^b_v$, where $\lambda^b_v=\hhat\lambda_v\cap X\subset B_{\hhat \lambda}\cap X$, there exists a  map 
$r_x\colon S \to B_{\hhat\lambda}\cap X$ such that $r_x(v)=x$ 
and $d_S(u,u')\leq d_X(r_x(u), r_x(u')\leq \rho d_S(u, u')$, where $S=[v,v_0]$ denotes the geodesic edge  path 
in $T_1$ joining $v$ and $v_0$ and $u,u'\in S$.  

\end{enumerate}

\begin{lem}\label{infinite diameter}
Let $P:X\to T$ be a tree of $\delta$-relatively hyperbolic spaces and $X_{v_0}$ be a vertex space of it.  If $\hhat{ X_{v_0}}$ is not quasiconvex in $\TT\CC(X)$, then there exists a sequence of relative geodesics $\lambda_i$ in $X_{v_0}$
 such that $diam_T(P(B_{\hat\lambda_i}))\to\infty$ as $i\to\infty$.
\end{lem}
\begin{proof}
 $\hhat{ X_{v_0}}$ not quasiconvex in $\TT\CC(X)$ implies that there exists an infinite sequence $\lambda_i$ of distinct relative geodesics in $X_{v_0}$
 such that the geodesics $\alpha_i$ of $\TT\CC(X)$ joining end points of $\lambda_i$ does not lie entirely inside $i$-neighborhood of $\hhat\lambda_i$.
 Therefore, there exists $x_i\in\alpha_i$ such that $d_{\TT\CC(X)}(x_i,\hhat\lambda_i)\geq i$. As $B_{\hat\lambda_i}$
 is quasiconvex, $\alpha_i$ lie in a uniform bounded neighborhood of $B_{\hhat\lambda_i}$.
 Thus, $d_{\TT\CC(X)}(\hhat \Pi_{\hhat\lambda_i}(x_i),\hhat\lambda_i)>i-K$. for some constant $K>0$ independent of $i$. This inequality implies that there exists 
 a vertex $v_i$ such that $\hhat \Pi_{\hhat\lambda_i}(x_i)\in \hhat {X_{v_i}}$ and $d_T(v_0,v_i)\to\infty$ as $i\to\infty$.
 Therefore, $diam_T(P(B_{\hat\lambda_i}))\to\infty$ as $i\to\infty$.
\end{proof}

\section{Hallways}\label{ht}

\begin{defn}(Page 45 of \cite{height})
 Let $P:X\to T$ be a tree of geodesic spaces.
 \begin{enumerate}
 \item (Hallways)
Let  $I=[0,1]$. A disk $\Theta:[0,n]\times I\to X$ is called to be a hallway of length $n$ if it satisfies the following conditions:
 \begin{enumerate}
 \item $\Theta^{-1}(\cup X_v: v\in Vertex(T))=\{0,1,...,n\}\times I$.
 \item $\Theta$ maps $i\times I$ to a geodesic in some vertex space $X_v$.
 \item $(P\circ \Theta): [0,n]\times I\to T$ factors through the canonical retraction to $[0,n]$
 and an isometry of $[0,n]$ to $T$. 
 \end{enumerate}
 \item (Boundary Thin Hallways) Let $\rho\geq 0$. A hallway $\Theta$ is said to be $\rho$-boundary thin if 
 $d(f(i,0),f(i+1,0))\leq \rho$ and $d(f(i,1),f(i+1,1))\leq \rho$ for all $i=0,1,...,n-1$.
  \end{enumerate}

\end{defn}

Given a geodesic $\hhat\lambda$ in $\hhat {X_{v_0}}$ and  $a\in B_{\hhat\lambda}\cap X$ there exists a $\rho$-quasi-isometric section
$r_a:[P(a),v_0]\to B_{\hhat \lambda}\cap X$ such that $r_a(P(a))=a$. For any element $x\in r_a([P(a),v_0])$,
we denote $\sigma^a_{\hhat\lambda}(x)$ to be the point $\hhat\lambda\cap r_a([P(a),v_0])$ in $X_{v_0}$.

\begin{lem}\label{hallway lemma} Suppose $\hhat{ X_{v_0}}$ is not quasiconvex in $\TT\CC(X)$. Then for each $i\in \mathbb N$, 
there exist a geodesic $\hhat\lambda_i\subset\hhat{X_{v_0}}$,
$a_i,b_i,x_i,y_i\in B_{\hhat {\lambda_i}}\cap X$ with $d_{\TT\CC(X)}(x_i,y_i)\leq 1$ and $P(x_i)=P(y_i)$ such that
$$d_{\hhat{X_{v_0}}}(\sigma^{a_i}_{\hhat\lambda_i}(x_i),\sigma^{b_i}_{\hhat\lambda_i}(y_i))\geq i.$$
 
\end{lem}
\begin{proof}
 Suppose the conclusion of the lemma does not hold. Then there exists $K\geq 0$ such that  for all geodesics $\hhat\lambda$ in $\hhat{X_{v_0}}$,
 for all points $a,b,x,y\in B_{{\hhat \lambda}}\cap X$ with $d_{\TT\CC(X)}(x,y)\leq 1$, $P(x)=P(y)$, 
  and for all quasi-isometric sections $r_{a}, r_{b}$ with $x\in r_{a}([P(a),v_0]), y\in r_{b}([P(b),v_0])$ 
 the distance 
 $$d_{\hhat{X_{v_0}}}(\sigma^{a}_{\hhat\lambda}(x),\sigma^{b}_{\hhat\lambda}(y)) \leq K.$$\\
 We will define a retraction map $\pi:B_{{\hhat \lambda}}\to\hhat{\lambda}$ such that it is uniformly coarsely Lipschitz.\\
  
 For all $x\in B_{\hhat \lambda}\cap X$, define $\pi(x)=\sigma^{x}_{\hhat\lambda}(x)$. If $x$ is a cone point of $B_{\hhat{\lambda}}$
 then there exists $x'\in B_{\hhat\lambda}\cap X$ such that $d_{\TT\CC(X)}(x,x')\leq \frac{1}{2}$ and $P(x)=P(x')$. 
 Define $\pi(x)=\sigma^{x'}_{\hhat\lambda}(x')$. Note that if there exists another $x''\in B_{\hhat\lambda}\cap X$
 such that $d_{\TT\CC(X)}(x,x'')\leq \frac{1}{2}$ and $P(x)=P(x'')$ then 
 from our assumption $d_{\hhat{X_{v_0}}}(\sigma^{x'}_{\hhat\lambda}(x'),\sigma^{x''}_{\hhat\lambda}(x'')) \leq K$.
 Thus, $\pi$ is well defined up to bounded discrepancy. \\
 By the same reason if $x,y\in B_{{\hhat \lambda}}$ with
 $P(x)=P(y)$ and $d_{\TT\CC(X)}(x,y)\leq 1$ then 
 $$d_{\hhat{X_{v_0}}}(\pi(x),\pi(y))\leq K.$$ Thus, by using triangle inequality, for all 
 $x,y\in B_{\hhat \lambda}$ with $P(x)=P(y)$ we have $$d_{\hhat{X_{v_0}}}(\pi(x),\pi(y))\leq K d_{\TT\CC(X)}(x,y)+K.$$\\
 Now, suppose that  $x,y\in B_{{\hhat \lambda}}\cap X$ and $d_T(P(x), P(y))=1$. Without any loss of generality we can assume that 
 $d_T(P(x),v_0)< d_T(P(y),v_0)$. Then, due to existence of quasi-isometric sections, there exists $z\in B_{{\hhat \lambda}}\cap r_y([P(y),v_0])$
 such that $P(x)=P(z)$ and $d_X(y,z)\leq \rho$. As above $d_{\hhat{X_{v_0}}}(\pi(x),\pi(z))\leq K d_{\TT\CC(X)}(x,z)+K$.\\
  As $z\in r_y([P(y),v_0])$, we have $\sigma^y_{\hhat\lambda}(y)=\sigma^y_{\hhat\lambda}(z)$. Again, by our assumption
 $$d_{\hhat{X_{v_0}}}(\sigma^y_{\hhat\lambda}(z),\sigma^z_{\hhat\lambda}(z))\leq K.$$ 
 So, $d_{\hhat{X_{v_0}}}(\pi(y),\pi(z))=d_{\hhat{X_{v_0}}}(\sigma^y_{\hhat\lambda}(z),\sigma^z_{\hhat\lambda}(z))\leq K$.
 And hence $$d_{\hhat{X_{v_0}}}(\pi(x),\pi(y))\leq K d_{\TT\CC(X)}(x,z)+2K\leq Kd_{\TT\CC(X)}(x,y) +\rho+2K.$$
 The above inequality implies that all geodesics $\hhat\lambda$ are uniformly quasiconvex in $B_{\hhat \lambda}$ and hence are uniformly quasiconvex in $\TT\CC(X)$.
 This statement contradicts the fact that  $\hhat{ X_{v_0}}$ is not quasiconvex in $\TT\CC(X)$.
 
\end{proof}
\begin{defn} (Quasi-isometric sections)
Let $P:X\to T$ be a tree of geodesic spaces.
Let $S$ be a geodesic segment in the tree $T$. A map $\Sigma: S\to X$ is said to be $\rho$-quasi-isometric section if $P(\Sigma(s))=s$ for all $s\in S$
and $d_S(u,u')\leq d_X(\Sigma(u),\Sigma(u'))\leq \rho d_S(u,u')$ for all $u,u'\in S$.
\end{defn}

\begin{defn}(Page 46 of \cite{height})(Hallway trapped by quasi-isometric sections) Let $P:X\to T$ be a tree of geodesic spaces and $\Theta$ be a hallway in $X$. Let $\mu_i=\Theta(i\times I)$,
and $\mu_i\subset X_{v_i}$, $i\in\{0,1,...,n\}$. 
We say that the hallway $\Theta$ with ends $\mu_0,\mu_n$ is trapped by two $\rho$-quasi-isometric sections $\Sigma_1:[v_0,v_n]\to X$, $\Sigma_2:[v_0,v_n]\to X$  if $\mu_i$ joins $\Sigma_1(v_i)$ to $\Sigma_2(v_i)$.

\end{defn}
Note that if a hallway $\Theta$ is trapped by $\rho$-quasi-isometric sections then $\Theta$ is  $\rho$-boundary thin.

The following lemma says that, if $\hhat{X_{v_0}}$ is  not a quasiconvex subset  in the  tree of coned-off spaces $\TT\CC(X)$, 
then there exists a sequence $\{\Theta_i\}$ of hallways  such that the lengths of $\Theta_i$ tend to 
infinity as $i\to\infty$. This fact for the tree of
 hyperbolic metric spaces was proved  by Mitra in \cite{height} 
 (viz. Lemma 4.2  and Corollary 4.3 of \cite{height} ). We adapt the arguments given in \cite{height}
in our set up of  relative hyperbolicity. 

\begin{lem}(Corollary 4.3 of \cite{height})\label{hallway existence}(Existence of hallways)
Suppose $P:X\to T$ is a tree of relatively hyperbolic spaces
with $\TC(X)$  a hyperbolic metric space.
Suppose $\hhat{X_{v_0}}$ is not quasiconvex
in $\TT\CC(X)$ then for each $i\in\mathbb N$ there exists a relative geodesic $\mu_i\subset X_{v_0}$  and
a $\rho$-boundary thin hallway $\Theta_i$ in $\TT\CC(X)$, with one  end as $\hhat\mu_i$, trapped by 
$\rho$-quasi-isometric sections $\Sigma_{1i},\Sigma_{2i}:P(\Theta_i)\to X$ such that the length of $\hhat\mu_i$ 
in $\hhat{X_{v_0}}$ is  greater than $i$ and the length of the hallway $\Theta_i$ is  a number $n_i$
such that $n_i\to\infty$ as $i\to\infty$.
\end{lem}
\begin{proof}
 
 For each $i\in\mathbb N$, from Lemma \ref{hallway lemma}, we have the following:\\
 (i) there exist a geodesic $\hhat\lambda_i\subset\hhat{X_{v_0}}$, $a_i,b_i,x_i,y_i\in B_{\hhat {\lambda_i}}\cap X$ with $d_{\TT\CC(X)}(x_i,y_i)\leq 1$, $P(x_i)=P(y_i)$, and\\ 
(ii) there exist $\rho$-quasi-isometric sections $r_{a_i}, r_{b_i}$ with $x_i\in r_{a_i}([P(a_i),v_0]), y_i\in r_{b_i}([P(b_i),v_0])$
\\ such that 
$$d_{\hhat{X_{v_0}}}(\sigma^{a_i}_{\hhat\lambda_i}(x_i),\sigma^{b_i}_{\hhat\lambda_i}(y_i))\geq i.$$
Let $\hhat\mu_i$ be the subsegment of $\hhat\lambda_i$ joining $\sigma^{a_i}_{\hhat\lambda_i}(x_i)$ and $\sigma^{b_i}_{\hhat\lambda_i}(y_i)$
and $\mu_i$ be a relative geodesic corresponding to $\hhat\mu_i$. Let $\Sigma_{1i}=r_{a_i}|_{[P(x_i),v_0]}$ and $\Sigma_{2i}= r_{b_i}|_{[P(y_i),v_0]}$
i.e. $\Sigma_{1i},\Sigma_{2i}$ are restrictions of $r_{a_i},r_{b_i}$ on $[P(x_i),v_0],[P(y_i),v_0]$ respectively.\\
As $P(x_i)=P(y_i)$ and $T$ is a tree, we have $[P(x_i),v_0]=[P(y_i),v_0]$.
Let $v_j\in [P(x_i),v_0] $ and $\hhat\mu_{ij}$ be a geodesic in $\hhat X_{v_j}$ joining $\Sigma_{1i}(v_j)$ and $\Sigma_{2i}(v_j)$
with $\hhat\mu_{i0}=\hhat\mu_i$.
Thus, we have a hallway $\Theta_i=\bigcup\limits_j{\hhat\mu_{ij}}$ trapped by quasi-isometric sections $\Sigma_{1i},\Sigma_{2i}$ and with one end $\hhat\mu_i$.
If lengths of the hallways $\Theta_i$ are uniformly bounded, then lengths of $\hhat\mu_i$ are also so, which is a contradiction.
Hence the statement of lemma holds.
\end{proof}

\section{Main Theorem}\label{mt}
Let $(\GG,\Lambda)$ be a finite graph of groups with all local groups finitely generated and  $P:X\to T$ be the tree of spaces corresponding to $(\GG,\Lambda)$. Let $G=\pi_1(\GG,\Lambda)$ and $\Sigma:[v_0,v_n]\to X$ be a quasi-isometric section, where $[v_0,v_n]$ is a geodesic in $T$.
The Cayley graph of the group $G$ is quasi-isometric to $X$. Thus, without loss of generality, we can take $\Sigma([v_0,v_n])$ to be contained in the Cayley graph of the group $G$.

\begin{defn}(Boundary Labeling of hallway) Let $\Theta$ be a hallway in the tree of spaces $P:X\rightarrow T$ coming from the finite graph of groups  $(\GG,\Lambda)$.
Let $\Theta$ be of length $n$ and it is trapped by quasi-isometric sections $\Sigma_1,\Sigma_2$. Let $v_0, v_1,...,v_n$  be successive vertices in $P(\Theta)\subset T$. We define  $Label(\Theta)$ as follows:\\
$$Label(\Theta)=({\Sigma_1(v_1)}^{-1}\Sigma_1(v_0),{\Sigma_2(v_{1})}^{-1}\Sigma_2(v_0),...,{\Sigma_1(v_n)}^{-1}\Sigma_1(v_{n-1}),{\Sigma_2(v_{n})}^{-1}\Sigma_2(v_{n-1})).$$
We say $Label(\Theta)$ to be the boundary labeling of $\Theta$.
 \end{defn}
 $Label$ is a map from the set of hallways of length $n$  to the Cartesian product $G^n$, where $G=\pi_1(\GG,\Lambda)$.  Let $\Theta$ be a hallway of length $n$ and it is trapped by $\rho$-quasi-isometric sections $\Sigma_1,\Sigma_2$. Then the length of the word $\Sigma_i(v_j)^{-1}\Sigma_i(v_{j-1})$ (in reduced form) is  bounded above by $\rho$, where $i=1,2$ and $j=1,...,n$. If $G$ is a finitely generated group, then there exists a finite subset $F$ of $G$ such that $Label(\Theta)\in F^n$ for all hallways  $\Theta$ of length $n$ . 
\begin{thm} \label{main}
Let  $G$ be the fundamental group of a finite  graph of groups  $(\GG,\Lambda)$
satisfying the following conditions:\\
(i) for each vertex $v$ of the finite graph $\Lambda$, the vertex group $G_v$ is finitely generated and hyperbolic relative to a 
finite collection $\HH_v$ of finitely generated subgroups of $G_v$;\\
(ii) the edge groups are quasi-isometrically embedded and relatively quasiconvex in respective vertex groups,  \\
(iii) $G$ is hyperbolic relative to  $\HH$, where $\HH$ is the collection of images of all $\HH_v$ in $G$,\\
(iv) for each vertex $v$ of $\Lambda$, $G_v$ has finite relative height in $G$.\\
Then for every vertex $w$ of $\Lambda$ the group $G_w$ is  a relatively quasiconvex subgroup of $G$.
\end{thm}
\begin{proof}
Let $v_0$ be a vertex of $\Lambda$. If possible, let $G_{v_0}$ be not relatively quasiconvex in $G$.  We will prove that there exists a 
vertex group $G_v$ and an
 infinite sequence $\{n_i\}$ of natural numbers such that the following holds:
 for each $n_i$ there exists $k_i=k(n_i)$-distinct cosets $g_{1n_i}G_v,...,g_{k_in_i}G_{v}$ such that $\bigcap\limits_{j=1}^{k_i}(g_{jn_i}G_vg_{jn_i}^{-1})$
 contains a loxodormic element and $k(n_i)\to\infty$ as $n_i\to\infty$.

 Finite subgroups and subgroups of parabolic subgroups are relatively quasiconvex.
 Now, as $G_{v_0}$ is not relatively quasiconvex in $G$, $G_{v_0}$ can not be a finite subgroup or contained in a conjugate of a parabolic
 subgroup $H\in\HH$. Let $X_{v_0}$ be the vertex space corresponding to the group $G_{v_0}.$
 Also, the space $\hhat{X_{v_0}}$ is not quasiconvex
 in $\TT\CC(X)$.  Using Lemma \ref{hallway existence}, for each $i\in\mathbb N$, there exists a relative geodesic $\mu_i\subset X_{v_0}$  and a
$\rho$-boundary thin hallway $\Theta_i$ in $\TT\CC(X)$, with one  end as $\mu_i$, trapped by 
$\rho$-quasi-isometric sections $\Sigma_{1i},\Sigma_{2i}:P(\Theta_i)\to X$ such that the length of $\hhat\mu_i$ 
in $\hhat{X_{v_0}}$ is  greater than $i$ and the length of the hallway $\Theta_i$ is a number $n_i$
such that $n_i\to\infty$ as $i\to\infty$. By taking all such hallways of length greater than $n_i$ and truncating them to of length $n_i$ gives infinitely such distinct hallways $\Theta_i$ of length $n_i$.

 Let $w_i$ be the element in $G_{v_0}$ representing $\mu_i$. If $w_i$ is a parabolic element then $w_i$ belongs to a parabolic subgroup and hence length of
$\hhat\mu_i$ is at most one. But, length of $\hhat\mu_i$ is greater than one. Thus, for each $i\in \mathbb N$, $w_i$  is a loxodormic element in $ G_{v_0}$. By our assumption, the embeddings of edge groups in respective vertex groups are strictly type preserving. This assumption implies that $w_i$ can never be a parabolic 
element of $G$. Hence, $w_i$ is also a loxodormic element in $G$.\par

  Consider $\mathcal S_{\rho}$ to be an infinite set of all $\rho$-boundary thin hallways of length $n$ emerging from $X_{v_0}$. 
 Let $q:T\to\Lambda$ be the quotient map. The restriction of composition $q\circ P$ to $\mathcal S_{\rho}$
 gives a map from the infinite set $\mathcal S_{\rho}$ to the finite set of vertices of $\Lambda$. 
 Thus, by Pigeon Hole Principle, there exists an infinite sequence $\{\Theta_m\}$ of hallways in $\mathcal S_{\rho}$ such that $q\circ P(\Theta_m)=q\circ P(\Theta_{m'})$
 for all $m\neq m'$. Consider this infinite sequence $\{\Theta_m\}$ of hallways where $q\circ P(\Theta_m)=q\circ P(\Theta_{m'})$
 for $m\neq m'$ and apply the $Label$ map on it. 
 $G$ being a finitely generated group, $Label(\{\Theta_m\})$ is a finite set. 
 Again, by Pigeon Hole Principle, there exists infinitely many hallways $\Theta_{m_j}\in\{\Theta_m\}\subset\mathcal S_{\rho}$ of same length
 with same  boundary labeling.\\ \\
 (Gluing of hallways:) Now consider two such $\rho$-boundary thin hallways $\Theta^1$, $\Theta^2$ of same length $n$, emerging from $X_{v_0}$, 
 with $q\circ P(\Theta^1)=q\circ P(\Theta^2)$  and of same boundary labeling. 
 Let $\Theta^i=\bigcup\limits_{j=1}^{n}\hhat\mu_{ij}$, $i=1,2$. Let $a_{ij},b_{ij}$ be end points of the relative geodesic $\mu_{ij}$. 
As $q\circ P(\Theta^1)=q\circ P(\Theta^2)$, for each $j\in\{1,2,...,n-1\}$ there exists a vertex $w_j\in\Lambda$ such that
$a_{1j}^{-1}b_{1j}$ and $a_{2j}^{-1}b_{2j}$ belongs to the group $G_{w_j}$. Due to same boundary 
 labeling, for all $j=1,2,...,n$, we have
 $$a_{1j+1}^{-1}a_{1j}=a_{2j+1}^{-1}a_{2j},$$
 $$ b_{1j+1}^{-1}b_{1j}=b_{2j+1}^{-1}b_{2j}.$$
 Let $c_{1j}=b_{1j}b_{2j}^{-1}a_{2j}$, then $a_{1j}^{-1}c_{1j}\in G_{w_j}$. Hence, $a_{1j}$ and $c_{1j}$ lie in same vertex space, say $X_{v_j}$,
 of the tree of space $X$. Let $\eta_j$
 be a relative geodesic in $X_{v_j}$ joining $a_{1j}$ and $c_{1j}$. Let $\Theta=\bigcup\limits_{j=1}^{n}\hhat\eta_{j}$, then $\Theta$ is a $\rho$-boundary thin hallway
 of length $n$ and $$a^{-1}_{1j+1}a_{1j}=c^{-1}_{1j+1}c_{1j}$$
 for all $j=1,2,...,n$.\\ \\
 
 Thus, by gluing hallways for each $n_i$ there exists a hallway $\Theta_{n_i}=\bigcup\limits_{j=1}^{n_i}\hhat\mu_{jn_i}$ of length $n_i$ such that\\
 (i) the relative geodesic $\mu_{1n_i}\subset X_{v_0}$\\
 (ii) if $p_{jn_i},q_{jn_i}$ are end points of $\mu_{jn_i}$
 then $$p^{-1}_{j+1n_i}p_{jn_i}=q^{-1}_{j+1n_i}q_{jn_i}$$
 for all $j=1,2,...,n_i-1$.
 
 Let $s_{jn_i}=p^{-1}_{j+1n_i}p_{jn_i}$ and $w_{jn_i}$ be the group element representing the relative geodesic $\mu_{jn_i}$.
 Then, for $j=1,...,n_i$
 $$w_{1n_i}=(s_{1n_i}s_{2n_i}...s_{jn_i})w_{jn_i}(s_{1n_i}s_{2n_i}...s_{jn_i})^{-1}.$$
  There exists a vertex $v_{jn_i}$ of the finite graph $\Lambda$
 such that $w_{jn_i}$ lies in the vertex group $G_{v_{jn_i}}$.

 Consider the set $S_{n_i}=\{w_{1n_i},...,w_{n_in_i}\}$.
 As $\Lambda$ is a finite graph,
 by Pigeon hole principle, there exists a vertex group $G_v$ and an infinite subsequence $\{k(n_i)\}$ of $\{n_i\}$ such that 
 $G_v\cap S_{n_i}$ contains $k(n_i)$ elements.
 Let $k_i=k(n_i)$.
 Suppose $G_v\cap S_{n_i}=\{w_{l_1n_i},w_{l_2n_i},...,w_{l_{k_i}n_i}\}$. 
 Then, $$w_{l_1n_i}=(s_{l_1n_i}...s_{l_jn_i})w_{l_jn_i}(s_{l_1n_i}...s_{l_jn_i})^{-1}.$$ 
 Let $g_{jn_i}=s_{l_1n_i}...s_{l_jn_i}$,
 Thus, $$w_{l_1n_i}\in\bigcap\limits_{j=1}^{k_i}g_{jn_i}G_vg_{jn_i}^{-1}.$$
 Thus, the intersection $\bigcap\limits_{j=1}^{k_i}g_{jn_i}G_vg_{jn_i}^{-1}$ contains 
 $w_{l_1n_i}$ which is a loxodormic element.
Thus, we have distinct left cosets $g_{1n_i}G_v,...,g_{k_in_i}G_v$ such that 
 $\bigcap\limits_{j=1}^{k_i}g_{jn_i}G_vg_{jn_i}^{-1}$ contains an loxodormic element and $k_i\to\infty$. So, relative height of $G_v$
 is infinite. This statement contradicts the hypothesis that relative height of each vertex group is finite.
 
\end{proof}

In the above Theorem \ref{main}, we can take relative height of each edge groups finite instead of taking relative height of each vertex groups finite.
Let $e_0$ be an edge between $v_0,v_0'$. Take a relative geodesic $\lambda_{e_0}$ in the edge space $X_{e_0}$. Join the  end points of
$f_{e_0,v_0}(\lambda_{e_0})$ by a relative geodesic $\lambda_{v_0}$ of $X_{v_0}$. Edge spaces being relatively quasiconvex in vertex spaces implies that
$\hhat{f_{e_0,v_0}(\lambda_{e_0})}$ lie in a uniform Hausdorff distance from $\hhat\lambda_{v_0}$. Construct the hyperbolic ladder $B_{\hhat\lambda_{v_0}}$
for $\hhat\lambda_{v_0}$. By the Flowing property \ref{Flow} of the hyperbolic ladder we have that 
if $v_1$ and $v_2$ are two adjacent vertices of $T$ joined by an edge $e$
with $B_{\hhat\lambda_{v_0}}\cap\hhat X_{v_1}=\hhat\lambda_{v_1}$ and 
$Nbhd_{G_v}(\lambda_{v_1}^b;C)\cap f_{e,v_1}(X_e)\neq\phi$ then $B_{\hhat\lambda_{v_0}}\cap\hhat X_{v_2}\neq\phi$. We take a maximal length
relative geodesic $\lambda_e$ in $X_e$ such that $f_{e,v_1}(\lambda_e)\subset Nbhd_{G_{v_1}}(\lambda_{v_1}^b;C)$.
These collection of relative geodesics $\lambda_e$ form a ladder $B_{\hhat\lambda_{e_0}}$ which are at a bounded Hausdorff distance from $B_{\hhat\lambda_{v_0}}$.
So, $B_{\hhat\lambda_{e_0}}$ is also  quasiconvex in $\TT\CC(X)$. If edge spaces are not relatively quasiconvex in $X$ then, as in Lemma \ref{hallway existence}, we get hallways
whose geodesics lie in edge spaces. Repeating the arguments in the proof of Theorem \ref{main}, by replacing vertex groups with edge groups,
we will get an edge group whose relative height is infinite.
 By taking the edge set of the finite graph to 
be  singleton, we have the following corollaries.

\begin{cor}\label{amalg cor}
(i) Let $G=A*_HB$, where $A,B$ are finitely generated groups with $H$ quasi-isometrically
embedded in both $A$ and $B$. Let $A_1,B_1$ be finitely generated subgroups of $A,B$ respectively such that $A$
is hyperbolic relative to $A_1$ and $B$
is hyperbolic relative to $B_1$. Suppose  $H$ is relatively quasiconvex in $A,B$ and $G$ is hyperbolic relative to $\{A_1,B_1\}$.
Then, relative height of $H$ is finite in $G$ if and only if
$H$ is relatively quasiconvex in $G$. \\ \\
(ii) Let $A$ be a  finitely generated group and $H$ be a subgroup of it. Let 
$\phi:H\to K$ be an isomorphism.  Let $A_1$ be a finitely generated subgroup of $A$ such that
$A$ is hyperbolic relative to subgroup $A_1$. Assume that $H$ is quasi-isometrically embedded and relatively quasiconvex in $A$.
Suppose the HNN-extension $G=<A,t|  tht^{-1}=\phi(h)~\forall ~h\in H>$ is hyperbolic relative  to $A_1$. 
Then, $H$ has finite relative height in $G$ if and only if $H$ is relatively quasiconvex in $G$.
\end{cor}

\section{Applications}\label{application}

An action of a group $G$ on a simplicial tree $T$ is said to be \textit{acylindrical} if there exists a constant $k\geq 1$ such that the pointwise
stabilizer of every geodesic segment in $T$ of length $\geq k$ is finite.

\begin{thm}
Let  $(\GG,\Lambda)$ be finite graph of groups such that the the action of the fundamental group of 
$(\GG,\Lambda)$ on the Bass-Serre covering tree $T$ of $\Lambda$ is acylindrical. Further,
 suppose the following conditions hold:\\
(i) for each vertex $v$ of the finite graph $\Lambda$, the vertex group $G_v$ is finitely generated and hyperbolic relative to a 
finite collection $\HH_v$ of finitely generated subgroups of $G_v$;\\
(ii) the edge groups are quasi-isometrically embedded and relatively quasiconvex in respective vertex groups,  \\
(iii) $G$ is hyperbolic relative to  $\HH$, where $\HH$ is the collection of images of all $\HH_v$ in $G$,\\
Then for every vertex $w$ of $\Lambda$ the group $G_w$ is  a relatively quasiconvex subgroup of $G$ .
\end{thm}
\begin{proof}
  If possible, suppose there exists an
 infinite sequence $\{n_i\}$ of natural numbers and a vertex $v$ of $\Lambda$ such that the following holds:
 for each $n_i$ there exists $k=k(n_i)$-distinct cosets $g_{1k}G_v,...,g_{kk}G_v$ such that $\bigcap\limits_{r=1}^{k}(g_{rk}G_vg_{rk}^{-1})$
 is infinite and $k(n_i)\to\infty$ as $n_i\to\infty$. Let $v_{1k},...,v_{kk}$ be the vertices in the Bass-Serre tree $T$ stabilized by
 $(g_{rk}G_vg_{rk}^{-1})$. The infinite subgroup 
 $\bigcap\limits_{r=1}^{k}(g_{rk}G_vg_{rk}^{-1})$ fixes the geodesic segment $[v_{1k},v_{kk}]$
 pointwise and $d_T(v_{1k},v_{kk})\to\infty$ as $k\to\infty$. This result contradicts the fact that the action of $G$ on $T$ is acylindrical. Thus, there exists a natural number $n$ such that if $m>n$ then for any distinct cosets  $g_1G_v,...g_kG_v$ the intersection
${\bigcap\limits_{i=1}^m}g_iG_vg^{-1}_i$ is finite for all vertex $v$ in $\Lambda$.
 Thus, in particular, each $G_v$ has finite relative height in $G$. By Theorem \ref{main}, we have the required result.
 
\end{proof}

\begin{defn}(Full Intersection with Parabolic Subgroups, Lemma 1.7 of \cite{dah}) A subgroup $H$ of a relatively hyperbolic group $(G,P)$ is said to 
 have full intersection with the conjugates of $P$ if $H\cap gPg^{-1}$ is either finite or has finite index in $gPg^{-1}$  for all $g\in G$.
\end{defn}
Dahmani, in \cite{dah}, introduced fully quasiconvex subgroups of a relatively hyperbolic group and he proved that a
fully quasiconvex subgroups have full intersection with parabolic subgroups.  Let $\Lambda(H)$ denote the limit set of $H$ in the relative hyperbolic boundary of $G$.

\begin{prop}\label{limit intersection}(Limit set Intersection, Proposition 1.10 of \cite{dah}) Let $Q_1,Q_2$ be relatively quasiconvex subgroups of a relatively hyperbolic group $G$
having full intersection with parabolic subgroups. Then $\Lambda(Q_1\cap Q_2)=\Lambda(Q_1)\cap\Lambda(Q_2)$.  
\end{prop}

Swarup, in \cite{swarup},  proved that  a finitely generated subgroup $Q$ of $G$ is quasiconvex if and only if 
the intersection of $Q$ with the conjugates of $A,B,$  in $G$ are quasiconvex (See Theorem 2 of \cite{swarup}). Theorem \ref{amalg thm} generalizes Swarup's result to relatively hyperbolic case. The proof of Theorem \ref{amalg thm} uses the same idea as that of Swarup's.

\begin{thm}\label{amalg thm}
Let  $G=A*_HB$, where $A,B,H$ are finitely generated groups with $H$ quasi-isometrically
embedded in $A,B$. Let $H_1$ be a finitely generated subgroup of $H$. Suppose $G,A,B,H$ are all hyperbolic
relative to the subgroup $H_1$ and $H$ is relatively quasiconvex in $G$. Suppose $Q$ is a finitely generated subgroup of $G$ containing $H_1$
such that $Q$ is hyperbolic relative to $H_1$ and $Q$ has full intersection with the conjugates of $H_1$.
The subgroup $Q$ is relatively quasiconvex in $G$ if and only if the intersection of $Q$ with every conjugates
of $A,B$ in $G$ are relatively quasiconvex in $G$.\\\\
\end{thm}

\begin{proof}
 The subgroup $H$ being relatively quasiconvex in $G$ implies that all conjugates of $A,B$ are also relatively quasiconvex in $G$. 
 Hence, if $Q$ is relatively quasiconvex in $G$ then the intersection of $Q$ with all the conjugates of $A,B$ are also so.
 For converse, let $T$ be the Bass-Serre tree and $X_T$ be the tree of relatively hyperbolic spaces corresponding to the amalgam   $G=A*_HB$.
 The quotient space $T/Q$ of $T$  gives a graph of groups structure for $Q$. Since $Q$ is finitely generated, there exists a finite subgraph $\Sigma$
 of $T/Q$ such that $Q$ is realized as a finite graph of groups with underlying graph $\Sigma$
 and vertex groups are intersection of $Q$ with some conjugates of $A,B$. Let $S$ be a component of inverse image of $\Sigma$ under the quotient map. Then
 $S$ is a subtree of $T$ with $S/Q=\Sigma$. We have a tree of spaces $Y_S$ corresponding to the  tree $S$ such that $Y_S$ naturally embeds in $X_T$. 
 The intersection of $Q$ with all conjugates of $A,B$ being relative quasiconvex  implies that they are relatively hyperbolic groups. 
 So, $Q$ turns out to be a finite graph-$\Sigma$ of relatively hyperbolic groups where
 edge groups (resp. coned-off edge groups) are
 quasi-isometrically embedded in vertex groups (resp. coned-off vertex groups) and hence
 $Y_S$ is a tree of relatively hyperbolic spaces. The subgroup $Q$  being hyperbolic relative to $H_1$ implies that the tree of coned-off spaces
 obtained by coning each local spaces of $Y_S$ is a hyperbolic metric space.
 Intersection of $Q$ with the conjugates of $A,B$, being relatively quasiconvex  in $G$,  have finite relative height in $G$ and hence in $Q$.
 Thus, by Theorem \ref{mt}, the intersection of $Q$ with the conjugates of $A,B$ are relatively quasiconvex in $Q$ and hence the vertex spaces of $Y_S$ are relatively quasiconvex in $Y_S$.\\ 
 Let $\partial_{rel}E$ denote the Bowditch boundary of a relatively hyperbolic group $E$. 
 Let $v$ be a vertex of $S$. As $S$ is a subtree of $T$, $v$ is also a vertex of $T$.
 Let $Q_v$ and $G_v$ denote the vertex groups for the groups $Q$ and $G$ respectively.
 The subgroups $Q_v$ and $G_v$ are relatively quasiconvex in the groups $Q$ and $G$ respectively. Therefore, $\partial_{rel} Q_v$   embeds in $\partial_{rel}Q$ and  $\partial_{rel} G_v$ embeds in $\partial_{rel}G$. The Bowditch
 boundaries $\partial_{rel}Q$ and $\partial_{rel}G$ have decomposition as follows: 
 $$\partial_{rel}Q=(\cup_{v\in V(S)}\partial_{rel} Q_v)\cup \partial S~\mbox{and}~ \partial_{rel}G=(\cup_{v\in V(T)}\partial_{rel} G_v)\cup \partial T .$$
 The subgroups $Q_v$ and $G_v$ are relatively quasiconvex in $G$ by our assumptions,  and $Q_v$ is a subgroup of $G_v$ preserving the cusps , therefore $Q_v$ is also relatively quasiconvex in $G_v$.
 Thus, there is a natural embedding
 $\partial_{rel} Q_v\hookrightarrow \partial_{rel} G_v$. The  subtree $S$ of $T$ induces an embedding $\partial S\hookrightarrow\partial T$.
 These embeddings induce a map $\partial_{rel}Q\to \partial_{rel}G$. We first prove that this map is injective.
The group $Q$  has full intersection with parabolic subgroups  implies that any vertex group $Q_v$ ($v$ is a vertex of $S$) has also full intersection with its parabolic subgroups. Thus, for any  two vertices $v_1,v_2$ of $S$, $\Lambda (Q_{v_1})\cap \Lambda (Q_{v_2})=\Lambda(Q_{v_1}\cap Q_{v_2})$ in both
 $\partial_{rel}Q$ and $\partial_{rel}G$.
  Hence, if two elements $\xi_1\in \partial_{rel} Q_{v_1} $ and $\xi_2\in \partial_{rel} Q_{v_2}$ are mapped to a same point in $\partial_{rel}G$
  then $\xi_1$ and $\xi_2$ are also identified in $\partial_{rel}Q$. \\ \\
  Now suppose if $Y_S$ is not relatively quasiconvex in $X_T$ then for each $n\in\mathbb N$ there exist $x_n,y_n\in Y_S$ and a relative geodesic $\gamma_n$ in $X_T$ joining $x_n$ and $y_n$ such that $\gamma_n$ does not lie in $n$-neighborhood of $Y_S$. Let   $\{[x_n,y_n]\}$ be a sequence of relative geodesics  in $Y_S$ joining $x_n$ and $y_n$. The action of the group $Q$ on $Y_S$ is proper and co-compact. Without loss of generality, by translating $[x_n,y_n]$ by a suitable element of $Q$,  we can assume that $[x_n,y_n]$ passes through a fixed ball $B(p;r)$ in $Y_S$ for all $n\in\mathbb N$.
  After passing to a subsequence, if necessary, $\{x_n\}$
  and $\{y_n\}$ converge to two different limits $\xi_1$ and $\xi_2$ respectively in $\partial_{rel}Q$.
  For all large $n$, $\gamma_n$ lie outside $n$-neighborhood of $B(p;r)$, which implies that 
  $\{x_n\}$  and $\{y_n\}$ converge to a same point in $\partial_{rel}G$. Thus,  
   $\xi_1$  gets identified with $\xi_2$ in $\partial_{rel}G$.
  This fact contradicts the injectivity of the map $\partial_{rel}Q\to \partial_{rel}G$. Hence $Y_S$ is relatively quasiconvex in $X_T$ and so
$Q$ is relatively quasiconvex in $G$.

\end{proof}

\end{document}